\date{}
\def\fullpage {
\addtolength{\topmargin}{-2 cm}
\addtolength{\oddsidemargin}{-1.6 cm} \addtolength{\textwidth}{+3.3 cm}
\addtolength{\textheight}{+3.5 cm}}
\journal{European Journal of Combinatorics}
\newcommand{\Z}{\mathbb{Z}}
\newtheorem{theorem}{Theorem}
\newtheorem{lemma}{Lemma}
\newtheorem{proposition}{Proposition}
\begin{document}

\begin{frontmatter}

\title{On sets of integers with restrictions on their products}

\author{Michael Tait\corref{cor1}}\ead{mtait@math.ucsd.edu}

\author{Jacques Verstra\"{e}te\corref{cor2}}\ead{jverstra@math.ucsd.edu}

\cortext[cor1]{Corresponding author}
\cortext[cor2]{Research supported by NSF Grant DMS-1101489.}

\address{Department of Mathematics\\
University of California at San Diego\\
9500 Gilman Drive, La Jolla, California 92093-0112, USA}

\begin{abstract}
A {\em product-injective labeling} of a graph $G$ is an injection $\chi: V(G) \to \Z$ such that $\chi(u)\chi(v) \not= \chi(x)\chi(y)$ for any distinct edges $uv, xy\in E(G)$. Let $P(G)$ be the smallest $N \geq 1$ such that there exists a product-injective labeling $\chi : V(G) \rightarrow [N]$. Let $P(n,d)$ be the maximum possible value of $P(G)$ over $n$-vertex graphs $G$ of maximum degree at most $d$. 
In this paper, we determine the asymptotic value of $P(n,d)$ for all but a small range of values of $d$ relative to $n$. Specifically, we show that there exist constants $a,b > 0$ such that 
$P(n,d) \sim n$ if $d \leq \sqrt{n}(\log n)^{-a}$ and $P(n,d) \sim n\log n$ if $d \geq \sqrt{n}(\log n)^{b}$.
\end{abstract}

\end{frontmatter}

\section{Introduction}

Let $G$ be a graph. A {\em product-injective labeling} of $G$ is an injection $\chi: V(G) \to \Z$ such that $\chi(u)\cdot \chi(v) \not= \chi(x)\cdot \chi(y)$ for distinct edges $uv,xy \in E(G)$.
Let $P(G)$ denote the smallest positive integer $N$ such that there is a product-injective labeling $\chi : V(G) \rightarrow [N]$. 
In this paper, our main results give asymptotically tight bounds on $P(G)$ relative to the maximum degree $d$
and number of vertices of the graph $G$, for all but a small range of values of $d \leq n - 1$. Let $P(n,d)$ be the maximum possible value of $P(G)$ over $n$-vertex graphs $G$ of maximum degree at most $d$. 
Specifically, we prove the following theorem:

\begin{theorem}\label{main} 
There exist constants $a,b > 0$ such that {\rm (i)} $P(n,d) \sim n$ if $d \leq \sqrt{n}(\log n)^{-a}$ and {\rm (ii)} $P(n,d) \sim n\log n$ if $d \geq \sqrt{n}(\log n)^{b}$.
\end{theorem}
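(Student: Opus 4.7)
The lower bound $P(n,d)\geq n$ follows from the injectivity of $\chi$, settling the lower bound in (i). For the universal upper bound $P(n,d)\leq(1+o(1))n\log n$ (hence the upper bound in (ii)), I would assign $\chi(v_i)=p_i$, the $i$th prime: unique factorization forces $p_ip_j=p_kp_\ell$ to imply $\{i,j\}=\{k,\ell\}$, so edge products are distinct, and $p_n\sim n\log n$ by the prime number theorem. The two remaining tasks are the upper bound in (i) and the lower bound in (ii).

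For (i) I would use a random labeling followed by a small prime-repair. Let $\chi_0\colon V\to[n]$ be a uniformly random bijection; injectivity forces conflicts to involve four distinct vertices. A standard divisor-sum estimate gives $M(n):=\#\{(a,b,c,d)\in[n]^4:ab=cd\}=O(n^2\log n)$, so the expected number of conflicting edge pairs is $O(|E(G)|^2 M(n)/n^4)=O(d^2\log n)$, which is $o(n/(\log n)^2)$ for $d\leq\sqrt n(\log n)^{-a}$ with any fixed $a>3/2$. Fix such a $\chi_0$, mark one endpoint in each conflicting pair to obtain $S\subseteq V$ with $|S|=o(n/(\log n)^2)$, and relabel each vertex of $S$ by a distinct prime in $(n,n(1+1/\log n)]$; by PNT this interval contains $\Omega(n/(\log n)^2)$ primes, which is more than enough. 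The resulting injection $\chi$ takes values in $[N]$ with $N=n(1+o(1))$, and classifying edges by the number of endpoints in $S$ shows conflict-freeness: edges with $0$, $1$, or $2$ endpoints in $S$ have products with $0$, $1$, or $2$ prime factors exceeding $n$ respectively, so conflicts across classes are impossible by unique factorization; within the ``$0$''-class they are killed by the choice of $S$, and within the ``$1$'' and ``$2$''-classes by the distinctness of the primes above $n$.

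For the lower bound in (ii) I would show that $G=G(n,d/n)$ satisfies $P(G)\geq(1-o(1))n\log n$ with high probability. Concentration gives $\Delta(G)\leq(1+o(1))d$, which we may treat as $\leq d$ after a harmless rescaling. Fix an injection $\chi\colon V\to[N]$ with $N=(1-\varepsilon)n\log n$, set $S=\chi(V)$, and observe that since $\pi(N)<(1-\varepsilon+o(1))n$ the set $S$ contains at least $\varepsilon n/2$ composites. A quantitative form of the Erd\H{o}s--S\'ark\"ozy multiplicative Sidon theorem should then yield $T(\chi):=\frac{1}{2}\bigl(\sum_m r_S(m)^2-\binom{n}{2}\bigr)\geq c\varepsilon^2 n^2$, where $r_S(m):=\#\{\{a,b\}\subseteq S:ab=m,\,a\neq b\}$. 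Janson's inequality applied to the events $A_q=\{\text{both edges of the conflict pair $q$ lie in $E(G)$}\}$ then gives $\Pr[\chi\text{ is valid on }G]\leq\exp(-\mu/3)$ with $\mu=T(\chi)p^2=\Omega(\varepsilon^2 d^2)$; the dependency term $\sum_{q\sim q'}\Pr[A_q\wedge A_{q'}]\leq p^3\sum_m r_S(m)^3$ is of lower order than $\mu$ via the divisor bound $\max_m r_S(m)=n^{o(1)}$ and $p=o(1)$. Union-bounding over at most $N^n=\exp((1+o(1))n\log n)$ injections, the total probability vanishes provided $\varepsilon^2 d^2\gg n\log n$, i.e.\ $d\gg\sqrt{n\log n}/\varepsilon$; holding $\varepsilon$ fixed, any constant $b>\tfrac{1}{2}$ then suffices.

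The hardest step is the quantitative Erd\H{o}s--S\'ark\"ozy bound $T(\chi)\geq c\varepsilon^2 n^2$. A weaker substitute is the Erd\H{o}s--Ford multiplication table estimate $|[N]\cdot[N]|=O(N^2/(\log N)^{c_1})$ with $c_1=1-\frac{1+\log\log 2}{\log 2}$, which via Cauchy--Schwarz gives only $T(\chi)\geq\Omega(n^2(\log n)^{c_1-2})$ and forces the larger threshold $b>\frac{3-c_1}{2}\approx 1.46$; either pathway yields some constant $b>0$, which is all the theorem requires.
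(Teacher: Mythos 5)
Your treatment of the trivial lower bound and of the prime labeling for the universal upper bound matches the paper. Your proof of part (i) is a genuinely different and essentially correct route: the paper labels from the set of integers in $[n+m]$ with at most $(\log n)^{\log 2 + o(1)}$ divisors and applies the modified local lemma (each conditional conflict probability is controlled by $\tau(\chi(u))\tau(\chi(v))$), which yields any $a>\log 2$; you instead take a uniformly random bijection into $[n]$, bound the expected number of conflicts by $O(d^2\log n)$ via the estimate $\#\{(a,b,c,d)\in[n]^4:ab=cd\}=O(n^2\log n)$, and repair the $o(n/(\log n)^2)$ bad vertices with fresh primes in $(n,n(1+1/\log n)]$. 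The repair is sound (the count of prime factors exceeding $n$ separates the three edge classes), and the argument is arguably simpler, at the cost of the weaker constant $a>3/2$ --- which still suffices for the theorem as stated.

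Part (ii) has a genuine gap. Your probabilistic skeleton (first moment over all $N^n$ injections, Janson or a direct product bound for a fixed $\chi$) is essentially the paper's, but everything rests on the supersaturation claim $T(\chi)\geq c\varepsilon^2 n^2$, which you assert by appeal to ``a quantitative form of the Erd\H{o}s--S\'ark\"ozy multiplicative Sidon theorem.'' No such off-the-shelf statement is available: the classical results bound the maximum size of a set with \emph{no} nontrivial quadruple ($\pi(N)+\Theta(N^{3/4}(\log N)^{-3/2})$), and supersaturation does not follow from this by any simple deletion argument, since destroying $T$ quadruples could require deleting up to $\min(T,n)$ elements. Proving a lower bound on $T$ is precisely the content of the paper's Section 4 (Lemmas \ref{saturation} and \ref{productsat}), which establishes only $T\gtrsim n^2(\log n)^{-8}$ (in its normalization) via Erd\H{o}s's bipartite representation $a=uv$, $u\in[n^{2/3}]\cup P$, $v\in[\sqrt n]$, and a dyadic 4-cycle count; the polylog loss there is exactly why the paper needs $b>4.5$ rather than $b>1/2$. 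Your fallback is also broken: Cauchy--Schwarz against Ford's multiplication-table bound gives $\sum_m r_S(m)^2\geq \binom{n}{2}^2/|[N]\cdot[N]| = \Theta\bigl(n^2(\log n)^{c_1-2}\bigr)$, which is \emph{smaller} than the diagonal term $\binom{n}{2}\approx n^2/2$ you must subtract to obtain $T$, so this pathway yields $T\geq$ (a negative quantity), i.e.\ nothing at all --- it does not even show a single coincidence exists. To close part (ii) you must actually prove some quantitative version of Lemma \ref{productsat}; the remaining Janson/union-bound bookkeeping is then routine and correctly laid out in your sketch.
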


An old result of Erd\H{o}s \cite{E} implies $P(K_n) \sim n\log n$, whereas Theorem \ref{main} shows that $P(G) \sim n\log n$ for graphs which are much sparser than $K_n$. 
The labeling of the vertices of any $n$-vertex graph $G$ with the first $n$ prime numbers is always a product-injective labeling from $[N]$ where $N \sim n\log n$, 
via the Prime Number Theorem. An analogous result to Theorem \ref{main} for labelings of graphs where differences or sums are required to be distinct for distinct edges was obtained by Bollob\'{a}s and Pikhurko~\cite{BP}, where a change in behavior was also observed around $d = \sqrt{n}$. Theorem \ref{main} will be proved with $a > \log 2$ and $b > 4.5$; while our method allows these values to be slightly improved, new ideas would be needed to determine $P(n,d)$ for all the intermediate values of $d$. In fact, the proof of Theorem \ref{main}(ii) establishes the much stronger result that if $G$ is the random graph on $n$ vertices with edge-probability $d/n$ and $d \geq \sqrt{n}(\log n)^b$, then $P(G) \sim n\log n$ almost surely as $n \rightarrow \infty$. We also remark that Theorem \ref{main} determines the maximum value of $P(G)$ over $n$-vertex graphs with $m$ edges for 
almost all possible values of $m$.

\medskip

{\bf Notation and Organization.} The paper is organized as follows. The proof of Theorem \ref{main} uses the modified local lemma, which we state in Section \ref{lemmas}, together
with some facts on the distribution of the number of divisors function $\tau$. The proof of Theorem \ref{main}(i) is given in Section \ref{thm1}, and
Theorem \ref{main}(ii) is proved in Section \ref{thm2}.

\medskip

For real numbers $y \geq x \geq 1$, we use the notation $[x] = \{1,2,\dots,\lfloor x\rfloor\}$ and $[x,y] = [y] \backslash [x]$. Let the Erd\H{o}s-R\'{e}nyi random graph, $G_{n,p}$, be a graph chosen from the probability space $\mathcal{G}_{n,p}$  where edges of $K_n$ are present in $G_{n,p}$ independently with probability $p$. For background on random graphs, see Bollob\'{a}s~\cite{B}.  All logarithms will be in the natural base, and all graphs will be simple and finite. If $(A_n)_{n \in \mathbb N}$ is a sequence of events in a probability space, then we say $A_n$ occurs almost surely as $n \rightarrow \infty$ if $\lim_{n \rightarrow \infty} P(A_n) = 1$.
We write $f(n) \sim g(n)$ for functions $f,g : \mathbb Z^+ \rightarrow \mathbb R$ if $f(n)/g(n) \rightarrow 1$ as $n \rightarrow \infty$ and 
$f(n) \ll g(n)$ if $f(n)/g(n) \rightarrow 0$ as $n \rightarrow \infty$.

\section{Preliminaries} \label{lemmas}

To prove Theorem \ref{main}, we make use of a probabilistic result known as the modified local lemma, together
with some well-known facts from analytic number theory regarding the number of divisors of positive integers.

\medskip

{\bf Modified local lemma.} The modified local lemma, which is a version of the Lov\'{a}sz Local Lemma (see Alon and Spencer~\cite{AS}, page 65), is used in
the following form:

\begin{proposition}\label{mod}
Let $A_1,...,A_n$ be events in a probability space and for each $i \in [n]$, let $(J_i,K_i)$ be a partition of $[n] \backslash \{i\}$.
If there exists $\gamma \in [0,1)$ such that
\[
\mathbb{P}\left(A_i \; | \; \bigcap_{k\in K} \overline{A}_k\right) \leq \gamma (1 - \mathrm{max}|J_i| \gamma)\]
then
\[ \mathbb{P}\left( \bigcap_{i = 1}^n \overline{A}_i\right) > 0.\]
\end{proposition}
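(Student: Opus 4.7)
The statement is the classical symmetric Lov\'{a}sz Local Lemma, phrased so that for each $i$ the set $K_i$ indexes events with respect to which $A_i$ is (implicitly) mutually independent, while $J_i$ indexes the remaining ``bad'' dependencies. I would follow the standard inductive argument of Erd\H{o}s and Lov\'{a}sz, specialized to the symmetric choice $x_i = \gamma$ for every $i$.

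The crux is the auxiliary estimate that for every $i \in [n]$ and every $S \subseteq [n]\setminus\{i\}$,
\[
\Pr\!\left(A_i \,\Big|\, \bigcap_{k \in S} \overline{A}_k\right) \leq \gamma.
\]
Once this uniform bound is in hand, applying the chain rule in the ordering $1,2,\dots,n$ gives
\[
\Pr\!\left(\bigcap_{i=1}^{n} \overline{A}_i\right) = \prod_{i=1}^{n} \Pr\!\left(\overline{A}_i \,\Big|\, \bigcap_{k<i}\overline{A}_k\right) \geq (1-\gamma)^n > 0,
\]
which is the desired conclusion.

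I would prove the auxiliary estimate by induction on $|S|$. Let $D = \max_i |J_i|$. For the base case $S = \emptyset$, mutual independence of $A_i$ from $\{A_k : k \in K_i\}$ collapses the hypothesis to $\Pr(A_i) \leq \gamma(1 - D\gamma) \leq \gamma$. For the inductive step with $|S| \geq 1$, decompose $S = S_J \sqcup S_K$ with $S_J = S \cap J_i$ and $S_K = S \cap K_i$, and write
\[
\Pr\!\left(A_i \,\Big|\, \bigcap_{k\in S}\overline{A}_k\right) = \frac{\Pr\!\left(A_i \cap \bigcap_{j\in S_J}\overline{A}_j \,\Big|\, \bigcap_{k\in S_K}\overline{A}_k\right)}{\Pr\!\left(\bigcap_{j\in S_J}\overline{A}_j \,\Big|\, \bigcap_{k\in S_K}\overline{A}_k\right)}.
\]
I would bound the numerator from above by $\Pr(A_i \mid \bigcap_{k \in S_K}\overline{A}_k)$; mutual independence of $A_i$ with $\{A_k : k \in K_i\} \supseteq \{A_k : k \in S_K\}$ collapses this to $\Pr(A_i)$, which is $\leq \gamma(1 - D\gamma)$ by hypothesis. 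I would bound the denominator from below via a union bound combined with the induction hypothesis applied to each $A_j$ with $j \in S_J$, conditioned on the strictly smaller set $S_K$:
\[
\Pr\!\left(\bigcap_{j\in S_J}\overline{A}_j \,\Big|\, \bigcap_{k\in S_K}\overline{A}_k\right) \geq 1 - \sum_{j\in S_J}\Pr\!\left(A_j \,\Big|\, \bigcap_{k\in S_K}\overline{A}_k\right) \geq 1 - |J_i|\gamma \geq 1 - D\gamma.
\]
Taking the ratio yields at most $\gamma$, closing the induction.

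The only delicate point is the numerator step: dropping the extra factors $\overline{A}_j$ for $j \in S_J$ is harmless since it only enlarges the event, but replacing $\Pr(A_i \mid \bigcap_{k \in S_K}\overline{A}_k)$ by the quantity bounded in the hypothesis depends on the structural role of the partition, namely that $A_i$ is mutually independent of all events indexed by $K_i$. Apart from this, the proof is pure bookkeeping and a chain-rule finish.
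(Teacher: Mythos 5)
The paper offers no proof of Proposition~\ref{mod} --- it simply cites Alon and Spencer --- and your argument is the standard inductive proof of the (lopsided) Local Lemma: establish $\Pr(A_i \mid \bigcap_{k\in S}\overline{A}_k)\le\gamma$ for all $S\subseteq[n]\setminus\{i\}$ by induction on $|S|$ via the numerator/denominator split, then finish with the chain rule. That skeleton is correct, including the bound $1-|S_J|\gamma\ge 1-\max_i|J_i|\gamma$ on the denominator and the $(1-\gamma)^n>0$ conclusion.

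The one substantive problem is your reading of the hypothesis. You assert that $A_i$ is ``(implicitly) mutually independent'' of $\{A_k : k\in K_i\}$ and you invoke this independence twice: in the base case to collapse the hypothesis to $\Pr(A_i)\le\gamma(1-D\gamma)$, and in the numerator step to replace $\Pr(A_i\mid\bigcap_{k\in S_K}\overline{A}_k)$ by $\Pr(A_i)$. No such independence is assumed in the proposition, and it is genuinely false in the paper's application: in Section~\ref{thm1} the labels come from a uniformly random $n$-subset of $L$, so no two of the events $A_{xy,uv}$ are independent. The whole point of the \emph{modified} local lemma is that independence is replaced by the quantitative condition that $\Pr\bigl(A_i \mid \bigcap_{k\in K}\overline{A}_k\bigr)\le\gamma(1-\max_i|J_i|\,\gamma)$ for \emph{every} subset $K\subseteq K_i$ (the statement's unbound $K$ should be read this way; compare ``Fixing any set $K\subset K_{xy,uv}$'' in Section~\ref{thm1}). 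Fortunately, this hypothesis hands you exactly the quantity you need at exactly the two places you appeal to independence: since $S_K\subseteq K_i$, the bound $\Pr(A_i\mid\bigcap_{k\in S_K}\overline{A}_k)\le\gamma(1-D\gamma)$ holds by assumption, with no detour through $\Pr(A_i)$. Deleting the independence invocations therefore repairs the proof and, indeed, yields the stronger statement the paper actually needs. Two small loose ends you should tie up: the case $S_J=\emptyset$ (where the denominator is $1$ and the hypothesis applies directly, so the induction has something to recurse to only when $|S_K|<|S|$), and the positivity of the conditioning events (guaranteed inductively by $\Pr(\bigcap_{k\in S}\overline{A}_k)\ge(1-\gamma)^{|S|}>0$), which is needed for the conditional probabilities to be defined.
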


\medskip

{\bf Distribution of the number of divisors.} For a natural number $k$, let $\tau(k)$ be the number of divisors of $k$, and let $\Omega(k)$ be the number of prime power divisors of $k$. The Hardy-Ramanujan Theorem \cite{HR} gives
$|\{x \leq N : |\Omega(x) - \log\log N| \gg \sqrt{\log \log N}\}| \ll N$ as $N \rightarrow \infty$.
Since $\tau(n) \leq 2^{\Omega(n)}$, one has the following result:

\begin{proposition}\label{tau} Let $\omega(N) \rightarrow \infty$ as $N \rightarrow \infty$. Then
\[ |\{n \leq N : \tau(n) \geq (\log N)^{\log 2}2^{\omega(N)\sqrt{\log \log N}}\}| \ll N.\]
\end{proposition}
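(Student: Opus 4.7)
The plan is to derive Proposition \ref{tau} as an immediate corollary of the Hardy-Ramanujan theorem quoted just above the statement, using the elementary inequality $\tau(n) \le 2^{\Omega(n)}$ also mentioned there. The only content is to convert the stated threshold on $\tau(n)$ into the threshold on $\Omega(n)$ required by Hardy-Ramanujan.

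First I would rewrite the cut-off as a power of $2$. Since all logarithms are natural,
\[ (\log N)^{\log 2} \;=\; e^{(\log 2)(\log\log N)} \;=\; 2^{\log\log N},\]
so the threshold appearing in the proposition equals $2^{\log\log N + \omega(N)\sqrt{\log\log N}}$.

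Next, suppose $n\le N$ satisfies $\tau(n)\ge (\log N)^{\log 2}\,2^{\omega(N)\sqrt{\log\log N}}$. Combining the bound above with $\tau(n)\le 2^{\Omega(n)}$ and taking logarithms base $2$ yields
\[ \Omega(n) \;\ge\; \log\log N + \omega(N)\sqrt{\log\log N}.\]
Since $\omega(N)\to\infty$, dividing by $\sqrt{\log\log N}$ shows $\Omega(n)-\log\log N \gg \sqrt{\log\log N}$, and in particular $|\Omega(n)-\log\log N|\gg \sqrt{\log\log N}$. The Hardy-Ramanujan theorem, in the form stated in the excerpt, then bounds the number of such $n\le N$ by $o(N)$, which is exactly the conclusion.

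There is no real obstacle; the argument is essentially a one-line verification once the threshold is rewritten as a power of $2$ and the inequality $\tau(n)\le 2^{\Omega(n)}$ is invoked. The only point worth noting is that the paper's $\ll$ in the Hardy-Ramanujan statement means $o(\cdot)$ rather than a Vinogradov-type absolute constant, so the hypothesis $\omega(N)\to\infty$ is genuinely used to produce a deviation of $\Omega(n)$ from $\log\log N$ that is asymptotically larger than $\sqrt{\log\log N}$.
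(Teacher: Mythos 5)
Your proof is correct and follows exactly the route the paper intends: rewriting $(\log N)^{\log 2}=2^{\log\log N}$, applying $\tau(n)\le 2^{\Omega(n)}$ to translate the hypothesis into $\Omega(n)-\log\log N\ge \omega(N)\sqrt{\log\log N}$, and invoking the quoted Hardy--Ramanujan bound. The paper gives no more detail than this, so your write-up is if anything a more careful version of the same one-line deduction.
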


In fact it turns out that $\Omega$ and $\log \tau$ have normal orders -- for more on normal orders see Tenenbaum~\cite{Tenenbaum}.

\section{Proof of Theorem \ref{main}(i)}\label{thm1}

We show that $P(G) \sim n$ for any graph $G$ with $V(G) = [n]$ and maximum degree 
\[ d \leq \sqrt{n}(\log n)^{-\log 2} 2^{-\omega(n)\sqrt{\log \log n}},\] 
where $\omega(n) \rightarrow \infty$ and $\omega(n) \leq \sqrt{\log\log n}$. This in turn shows that Theorem \ref{main}(i) holds for any $a > \log 2$.
Let $m = \lceil 4n/\omega(n) \rceil$, and let $L$ be the set of the first $n + m$ natural numbers with at most
\[ t = \omega(n)^{-1}(\log n)^{\log 2}2^{\omega(n)\sqrt{\log \log n}}\]
divisors. Since $\omega(n) \leq \sqrt{\log \log n}$ and $m \ll n$, Proposition \ref{tau} shows that 
\[ \max L \sim n + m \sim n.\]
Now uniformly and randomly select an $n$-element subset $\{\ell_1,\ell_2,\dots,\ell_n\}$ of $L$ and define the injective
labeling $\chi(i) = \ell_i$. For an unordered pair $\{xy,uv\} \in {E(G) \choose 2}$ of distinct edges of $G$, let $A_{xy,uv}$ be the event $\chi(x)\chi(y) = \chi(u)\chi(v)$ and define
\[ J_{xy,uv} = \{A_{jk, rs} :  \{j,k,r,s\} \cap \{x,y,u,v\} \not = \emptyset\} \quad \mbox{ and } \quad K_{xy,uv} = {E(G) \choose 2} \setminus J_{xy,uv} \cup \{A_{xy, uv}\}.\]
We apply the modified local lemma, Proposition \ref{mod}, to the events $A_{xy,uv}$. Fixing any set $K \subset K_{xy,uv}$, the set
\[ M := L \backslash \{\chi(z) : z \in V(G) \backslash \{u,v,x,y\}\}\]
has size at least $m + 4$. For any labels $\chi(u),\chi(v) \in M$, the number of ways of choosing $\chi(x),\chi(y) \in M$ such that
$\chi(x)\chi(y) = \chi(u)\chi(v)$ is at most
\[ \tau(\chi(u)\chi(v)) \leq \tau(\chi(u))\tau(\chi(v)) \leq t^2.\]
Therefore
\[ \mathbb P(A_{xy,uv} \; | \; \bigcap_{\{jk,rs\} \in K} \overline{A}_{jk,rs}) \leq \frac{t^2}{{|M| \choose 2}} < \frac{2t^2}{m^2}.\]
Note that regardless of the labeling of $u,v,x$ and $y$, none of the events $A_{jk,rs} : \{jk,rs\} \in K$ occur, since only edges
which contain at least one of $u,v,x$ and $y$ are affected by the labeling of $u,v,x$ and $y$. For any $\{xy,uv\} \in {E(G) \choose 2}$,
\[ |J_{xy,uv}| \leq 4d|E(G)| \leq 2d^2n.\]
Taking $\gamma = 1/(4d^2 n)$, and using $m^2 \geq 16d^2 t^2 n$, we find
\[ \gamma(1 - \gamma \max|J_{xy,uv}|) \geq \frac{1}{8d^2 n} \geq \frac{2t^2}{m^2}.\]
By the modified local lemma, the probability that none of the events $A_{xy,uv}$ occur is positive. In other words, there exists
a product-injective labeling $\chi : V(G) \rightarrow [N]$ where $N = \max L \sim n$. \qed

\section{Proof of Theorem \ref{main}(ii)}\label{thm2}

In this section, we prove that labeling with primes is asymptotically optimal for graphs that are much less dense than $K_n$, namely
for the random graph $G_{n,d/n}$ with $d \geq \sqrt{n}(\log n)^b$ and $b > 4.5$. Since $G_{n,d/n}$ for $d \geq \sqrt{n}(\log n)^b$ 
has maximum degree asymptotic to $d$, this is enough for Theorem \ref{main}(ii). Throughout this section, if $H$ is a graph then $C_4(H)$ is the number of 4-cycles in $H$.

\subsection{Counting 4-cycles}

\begin{lemma}\label{saturation}
Let $B = B(U,V)$ be a bipartite graph with $|U| = m$ and $|V| = n$, and let $d$ be the average degree of the vertices in $V$. If $nd^2 \geq 4m^2$ and $d \geq 2$, then
\[ C_4(B) \geq \frac{n^2 d^4}{64m^2}.\]
\end{lemma}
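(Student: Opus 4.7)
The plan is to count $4$-cycles in $B$ by a standard double counting together with two applications of Jensen's inequality. A $4$-cycle with vertices alternating between $U$ and $V$ is uniquely determined by an unordered pair $\{u_1,u_2\} \subset U$ and a choice of two common neighbors of $u_1,u_2$ in $V$. Writing $c(u_1,u_2) = |N(u_1) \cap N(u_2)|$, this gives the identity
\[ C_4(B) = \sum_{\{u_1,u_2\} \subset U} \binom{c(u_1,u_2)}{2}. \]
On the other hand, by counting paths of length two with midpoint in $V$,
\[ X := \sum_{\{u_1,u_2\} \subset U} c(u_1,u_2) = \sum_{v \in V} \binom{\deg(v)}{2}. \]

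Next I would exploit convexity of $\binom{x}{2}$ twice. Since $V$ has average degree $d$, Jensen's inequality gives $X \geq n\binom{d}{2} \geq nd^2/4$ (using $d \geq 2$). Writing $M = \binom{m}{2}$, a second application of Jensen to the first display yields
\[ C_4(B) \geq M \binom{X/M}{2}. \]
The hypothesis $nd^2 \geq 4m^2$ combined with $M \leq m^2/2$ gives $X/M \geq nd^2/(2m^2) \geq 2$, so $\binom{X/M}{2} \geq (X/M)^2/4$.

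Plugging in the lower bound $X \geq nd^2/4$ and the upper bound $M \leq m^2/2$ to estimate $(X/M)^2$ from below, and the lower bound $M \geq m^2/4$ (valid for $m \geq 2$, which we may assume since otherwise the condition $nd^2 \geq 4m^2$ and $d \geq 2$ force the conclusion trivially) to handle the leading factor, one obtains
\[ C_4(B) \geq \frac{m^2}{4} \cdot \frac{1}{4}\left(\frac{nd^2}{2m^2}\right)^2 = \frac{n^2d^4}{64 m^2}, \]
as required. There is no genuine obstacle here: the proof is a routine exercise in double counting plus convexity, and the only thing to be careful about is keeping the directions of the inequalities consistent when approximating $\binom{d}{2}$, $\binom{m}{2}$, and $\binom{X/M}{2}$ by their leading-order quadratic terms, which is where the final constant $1/64$ arises.
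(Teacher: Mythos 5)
Your proof is correct and follows essentially the same route as the paper: the identity $C_4(B)=\sum_{\{u_1,u_2\}\subset U}\binom{c(u_1,u_2)}{2}$, the double count of paths through $V$, two applications of Jensen, and the bound $\binom{x}{2}\geq x^2/4$ for $x\geq 2$ (justified by $d\geq 2$ and by $nd^2\geq 4m^2$). The only difference is cosmetic bookkeeping in the final constant-chasing, where you split the bounds on $M=\binom{m}{2}$ two ways; this is valid and yields the same constant $1/64$.
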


\begin{proof}
This is a standard exercise in applying Jensen's Inequality, but we include the proof for completeness. Let $M = {m \choose 2}$ and let $d(u,v)$ be the codegree of $u$ and $v$, that is, the number of vertices of $B$ adjacent to both $u$ and $v$. Then the number $C_4(B)$ of 4-cycles in $B$ is precisely
\[ C_4(B) = \sum_{\{u,v\} \subset U} {d(u,v) \choose 2}.\]
Let $M = {m \choose 2}$. By Jensen's Inequality, and since
\[ \sum_{\{u,v\} \subset U} d(u,v) = \sum_{w \in V} {d(w) \choose 2},\]
we have
\[ C_4(B) \geq M { \frac{1}{M} \sum_{w \in V} {d(w) \choose 2} \choose 2}.\]
By Jensen's Inequality again,
\[ \sum_{w \in V} {d(w) \choose 2} \geq n{d \choose 2}.\]
Therefore
\[ C_4(B) \geq M {\frac{n}{M}{d \choose 2} \choose 2}.\]
Since $nd^2 \geq 4m^2$, and ${x \choose 2} \geq \frac{1}{4}x^2$ for $x \geq 2$,
\[ \frac{n}{M} {d \choose 2} \geq \frac{nd^4}{2m^2} \geq 2.\]
Using ${x \choose 2} \geq \frac{1}{4}x^2$ again for $x \geq 2$,
\[ C_4(B) \geq \frac{n^2 d^4}{64M} \geq \frac{n^2 d^4}{64m^2}.\]
This proves the lemma.
\end{proof}

\subsection{Counting solutions to $uv = xy$}

A solution to $uv = xy$ is non-trivial if $\{u,v\} \neq \{x,y\}$.

\begin{lemma}\label{productsat}
For all $\varepsilon > 0$, there exist $\delta > 0$ and $n_0(\varepsilon)$ such that for $n \geq n_0(\varepsilon)$, if $A\subset [n]$ and $|A| \geq (1+\varepsilon) n/\log n$, then the number of non-trivial quadruples $\{a,b,c,d\}\in \binom{A}{4}$ satisfying $ab = cd$ is at least $\delta n^2 (\log n)^{-8}$.
\end{lemma}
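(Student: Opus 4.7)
My plan is to apply Lemma~\ref{saturation} to a bipartite graph $B$ whose $4$-cycles encode non-trivial product equalities in $A$, after a dyadic reduction.

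By pigeonhole there is some $T\le n/2$ such that $A':=A\cap[T,2T)$ satisfies $|A'|\gtrsim |A|/\log n \gtrsim n/(\log n)^2$. I work inside this dyadic slice so that all four elements of a candidate quadruple lie in $[T,2T)$ and so have comparable magnitudes, which simplifies the distinctness analysis and bounds the counting multiplicity. I then form $B=B(U,V)$ with $U,V\subseteq[\sqrt{n}]$ and edge set $\{(u,v)\in U\times V:uv\in A'\}$. A $4$-cycle $(u_1,v_1,u_2,v_2)$ in $B$ produces four elements $a_{ij}:=u_iv_j\in A'$ satisfying $a_{11}a_{22}=a_{12}a_{21}$, exactly the desired product identity; the four $a_{ij}$ must be pairwise distinct unless one of the coincidences $a_{11}=a_{22}$ or $a_{12}=a_{21}$ occurs (never both, as multiplying the two relations would force $u_1=u_2$), and these degenerate $4$-cycles can be counted and subtracted separately.

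The edge count is bounded below using Proposition~\ref{tau} together with the prime-counting function: since $|A'|\gtrsim n/(\log n)^2$ greatly exceeds the number of $\tau$-small integers in $[T,2T)$, for most $a\in A'$ the normal-order bound $\tau(a)\ge(\log n)^{\log 2 - o(1)}$ holds, which forces a positive proportion of the divisors of $a$ to lie in the relevant sub-interval of $[\sqrt{n}]$. After verifying the hypothesis $|V|d^2\ge 4|U|^2$ of Lemma~\ref{saturation} (with $d$ the average degree on the $V$-side), the lemma gives $C_4(B)\gtrsim|V|^2d^4/|U|^2$; with $|U|\asymp\sqrt{n}$ and $d\asymp\sqrt{n}/(\log n)^2$ this is $\gtrsim n^2/(\log n)^8$. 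Subtracting the degenerate $4$-cycles and dividing by the bounded counting multiplicity then yields the asserted $\delta n^2(\log n)^{-8}$ non-trivial product-equal quadruples.

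The hardest step is the edge-count lower bound: by Ford's theorem on the multiplication table a ``typical'' integer has few divisors in any short interval near $\sqrt{a}$, so naive symmetric choices of $U$ and $V$ near $\sqrt{T}$ can fail to produce enough edges (for instance when $A$ consists mostly of primes plus semiprimes $pq$ with $q>\sqrt{n}$). To get around this I would take $U$ and $V$ asymmetrically — for example $V$ wide enough to include the co-factors $a/u$ for $a\in A'$ and $u$ ranging over a sub-interval of $[\sqrt{n}]$, or restricting $U$ to a shorter interval of small integers so that the condition $|V|d^2\ge 4|U|^2$ is easy to satisfy — and use the density $|A'|/T\gtrsim (\log n)^{-2}$ to force enough $a\in A'$ to admit the required factorization $a=uv$ via Proposition~\ref{tau}.
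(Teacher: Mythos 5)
Your overall architecture (encode product equalities as $4$-cycles of a bipartite ``divisor graph'' and apply Lemma~\ref{saturation}) is the right one and matches the paper, but the step you yourself flag as hardest --- the lower bound on $|E(B)|$ --- is a genuine gap, and the proposed repairs do not close it. With $U,V\subseteq[\sqrt{n}]$, an element $a\in A'$ contributes an edge only if $a$ has a divisor in the narrow range $[a/\sqrt{n},\sqrt{n}]$. An adversarial $A$ of density $(1+\varepsilon)/\log n$ can avoid such elements almost entirely: take $A$ to be the primes in $[n]$ together with the numbers $2p$ for primes $p\in(n/4,n/2]$. This $A$ has size about $1.25\,n/\log n$ and satisfies the conclusion of the lemma (via $(2p)q=p(2q)$), yet no element of it admits a factorization $uv$ with both $u,v\le\sqrt{n}$, so your graph $B$ is empty. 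Normal-order facts cannot rescue this: the exceptional sets in Proposition~\ref{tau} and in the Hardy--Ramanujan theorem have density $o(1)$, which dwarfs $|A|/n\approx 1/\log n$, so an arbitrary $A$ of this density may consist entirely of atypical integers; moreover Proposition~\ref{tau} is an upper bound on $\tau$ and says nothing about where divisors lie, and (as you note via Ford) even typical integers usually have no divisor in a prescribed dyadic window near $\sqrt{a}$. Your suggested fixes --- widening $V$ to contain the cofactors $a/u$, or shrinking $U$ --- either push $V$ far beyond $[\sqrt{n}]$ (destroying the parameters needed for Lemma~\ref{saturation} and the $(\log n)^{-8}$ count) or make the factorization $a=uv$ with $v\le\sqrt{n}$ impossible for $a$ near $n$.

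The paper's proof supplies exactly the missing ingredient, and it is worth internalizing: following Erd\H{o}s, take $U=[n^{2/3}]\cup P$ where $P$ is the set of primes in $[n^{2/3},n]$, and $V=[\sqrt{n}]$. Then \emph{every} $a\in[n]$ has a factorization $a=uv$ with $u\in U$, $v\in V$, $u\ge v$ (if the largest prime factor of $a$ exceeds $n^{2/3}$ put it in $U$; otherwise greedily multiply prime factors until the product lands in $[n^{1/3},n^{2/3}]$), so $|E(H)|=|A|$ with no loss at all. The argument is then a dichotomy rather than a direct edge-count lower bound: decompose $H$ dyadically into $H_0,\dots,H_{k_0}$; if some $H_k$ with $k\ge 1$ has at least $\varepsilon n/(2\log n)^2$ edges, Lemma~\ref{saturation} already gives $\gtrsim \varepsilon^4 n^2(\log n)^{-8}$ four-cycles; otherwise the pieces $H_k$ contribute at most $\varepsilon n/(4\log n)$ edges in total, and in $H_0$ the degree-one vertices of $U_0$ contribute at most $\pi(n)$ edges while the rest again yield many $4$-cycles unless they contribute $O(\delta^{1/2}n/\log n)$ edges --- summing gives $|A|=|E(H)|<(1+\varepsilon)n/\log n$, a contradiction. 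The prime-counting function thus enters as the unavoidable main term that the excess $\varepsilon n/\log n$ must beat; no pigeonholing of $A$ into a dyadic slice is needed (and your pigeonhole step, while not fatal, is also not what produces the $(\log n)^{-8}$ --- that comes from the threshold $\varepsilon n/(2\log n)^2$ in the dichotomy).
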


\begin{proof} Let $n_0(\varepsilon)$ be the smallest positive integer such that for $n \geq n_0(\varepsilon)$,
\begin{eqnarray}
\frac{\varepsilon\sqrt{n}}{(2\log n)^2} &\geq& 2. \label{n1} \\
\frac{\varepsilon^2 n^2}{(2\log n)^4} &\geq& \frac{n^2}{(\log n)^6}. \label{n2} \\
(1 + \tfrac{1}{2}\varepsilon)\frac{n}{\log n} &\geq& \pi(n). \label{n3} \\
\pi(n) - \pi\Big(\frac{n}{2(\log n)^3}\Bigr) - \pi(n^{2/3}) &\geq& 4(\log n)^6. \label{n4}
\end{eqnarray}
Note that the last pair of inequalities is possible since $\pi(n) \sim n/\log n$ by the Prime Number Theorem. We shall prove the lemma with $\delta = 2^{-14}\varepsilon^4$ and $n \geq n_0(\varepsilon)$.

\medskip

Consider the bipartite graph $H = H(U,V)$ with parts $U = [n^{2/3}] \cup P$ and $V = [\sqrt{n}]$, where $P$ comprises the primes in the interval $[n^{2/3}, n]$, 
and where $uv \in E(H)$ if there exists $a \in A$ such that $a = uv$ with $u \geq v$. 
Erd\H{o}s~\cite{E} made the following observation:

\begin{center}
{\em for any $a\in [n]$, there exist $u \in U$ and $v \in V$ such that $a = uv$ and $u \geq v$.}
\end{center}

Consequently, $|E(H)| = |A|$. If $\{uv,vw,wx,xu\}$ is a 4-cycle in $H$, then $uv,wx \in A$ and $vw,xu \in A$ and 
$(uv)(wx) = (vw)(ux)$. Therefore $C_4(H)$ is the number of non-trivial solutions to $ab = cd$ with $a,b,c,d \in A$. 
For the remainder of the proof, we show $C_4(H) \geq \delta n^2 (\log n)^{-8}$. 

\medskip

Let $k_0$ be the largest integer $k$ such that $2^{k + 1} < \sqrt{n}/(\log n)^3$. For $1 \leq k \leq k_0$, let
\[ U_k:=\{u\in U:  2^{k-1} \sqrt{n} \leq u < 2^k \sqrt{n}\} \quad \mbox{ and } \quad  V_k:= \{v\in V: v \leq 2^{1-k}\sqrt{n}\}.\]
Denote by $H_k$ the subgraph of $H$ induced by $U_k$ and $V_k$. Also, let $H_0$ be the subgraph of $H$ induced by 
$U_0$ and $V_0$, where 
\[ U_0 = \{u \in U : u > n/2(\log n)^3\} \quad \mbox{ and } \quad V_0 = \{v \in V : v \leq 2(\log n)^3\}.\]
Then $H = \bigcup_{k = 0}^{k_0} H_k$. We consider the subgraphs $H_k : k \geq 1$ separately from $H_0$. 

\medskip

{\bf Claim 1.} {\em If for some $k \in [k_0]$, $|E(H_k)| \geq \varepsilon n/(2\log n)^2$, then $C_4(H_k) \geq \delta n^2(\log n)^{-8}$.} \\
{\it Proof.} The average degree in $H_k$ of vertices in $V_k$ is
\[ d = \frac{|E(H_k)|}{|V_k|} \geq \frac{\varepsilon 2^k \sqrt{n}}{(2\log n)^{2}}.\]
Since $n \geq n_0(\varepsilon)$, (\ref{n1}) gives $d \geq 2$ and (\ref{n2}) gives $|V_k| d^2 = |E(H_k)|^2 \geq 4|U_k|^2$.
By Lemma \ref{saturation} with $m = |U_k|$,
\[ C_4(H_k) \geq \frac{\varepsilon^4 n^2}{2^{14}(\log n)^{8}} = \delta n^2 (\log n)^{-8}.\]
This proves the claim. \qed

\medskip

Since $C_4(H) \geq C_4(H_k)$, we are done if $|E(H_k)| \geq \varepsilon n/(2\log n)^2$ for some $k \in [k_0]$, so we assume this is not the 
case for any $k \in [k_0]$. Then
\begin{equation}\label{sumhk}
\sum_{k = 1}^{k_0} |E(H_k)| \leq \frac{\varepsilon k_0 n}{(2\log n)^2} < \frac{\varepsilon n}{4\log n}.
\end{equation}
Next we consider $H_0$. 

\medskip

{\bf Claim 2.} {\em If $C_4(H) < \delta n^2(\log n)^{-8}$, then }
\begin{equation}\label{H0}
|H_0| < \pi(n) + \frac{16\delta^{1/2}n}{\log n}. 
\end{equation}
{\it Proof.} Let $\tilde{U}_0$ comprise all vertices of $U_0$ of degree at least two in $H_0$ and let $\tilde{H}_0$ be 
the subgraph of $H_0$ induced by $\tilde{U}_0 \cup V_0$. Then
\begin{equation}\label{degr1}
|E(H_0 \backslash \tilde{H}_0)| \leq |U_0| \leq \pi(n)
\end{equation}
Let $d$ be the average degree in $\tilde{H}_0$ of the vertices in $\tilde{U}_0$. Then $d \geq 2$ and by (\ref{n4}), 
$|\tilde{U}_0|d^2 \geq 4|\tilde{U}_0| \geq 4|V_0|^2$. By Lemma \ref{saturation} with $m = |V_0|$,
\begin{equation}\label{c4}
C_4(\tilde{H}_0) \geq \frac{|\tilde{U}_0|^2 d^4}{64m^2} \geq \frac{|E(\tilde{H}_0)|^2}{64m^2}.
\end{equation}
Since $C_4(H_0) < \delta n^2 (\log n)^{-8}$ and $m \leq 2(\log n)^3$, (\ref{c4}) gives
\[ |E(\tilde{H}_0)| < 8\delta^{1/2}(\log n)^{-4}mn < \frac{16\delta^{1/2}n}{\log n}.\]
Together with (\ref{degr1}), this completes the proof of Claim 2. \qed

\medskip

We now complete the proof of the lemma. By (\ref{sumhk}) and (\ref{H0}), 
\[ |E(H)| \leq \sum_{k = 0}^{k_0} |E(H_k)| < \pi(n) + \frac{16\delta^{1/2} n}{\log n} + \frac{\varepsilon n}{4\log n}.\]
Since $\delta = 2^{-14}\varepsilon^4$, the last two terms above are at most $\varepsilon n/2\log n$.
By (\ref{n3}), $\pi(n) \leq (1 + \varepsilon/2)n/\log n$, so we conclude $|E(H)| < (1 + \varepsilon)n/\log n$.
Since $|A| = |E(H)|$ and $|A| \geq (1 + \varepsilon)n/\log n$, this contradiction completes the proof.
\end{proof}

\subsection{Proof of Theorem \ref{main}(ii)}

Let $\varepsilon > 0$, and let $\delta$ be given by Lemma \ref{productsat}. Let $p = 2(\log n)^{4.5}/(\delta\sqrt{n})$.
For a fixed labeling $\chi : V(G_{n,p}) \rightarrow \mathbb Z$, let $\mathbb P(\chi)$ denote the probability that $\chi$ is a product-injective labeling of $G = G_{n,p}$. To prove Theorem 
\ref{main}(ii), we show that if $n \geq (1 + \varepsilon)N/\log N$, then the expected number $E$ of product-injective labelings $\chi : V(G) \rightarrow [N]$ satisfies
\begin{equation}\label{expected}
 E = \sum_{\chi : V(G) \rightarrow [N]} \mathbb P(\chi) \leq {N \choose n} \max_{\chi} \mathbb P(\chi) \ll 1.
 \end{equation}
To prove this, we show $\mathbb P(\chi) \ll N^{-n}$ for every fixed labeling $\chi : V(G) \rightarrow [N]$. Let $k\in [N^2]$, and let $g_k$ be the number of representations (for the given function $\chi$) of the form $k = \chi(i)\chi(j)$.  Then for $\chi$ to be product-injective, for each $k$, at most one of the $g_k$ possible edges
 $\{i,j\}$ with $k = \chi(i)\chi(j)$ may be selected to be in the random graph $G$.  Therefore,
\begin{equation}\label{probability product}
\mathbb P(\chi)= \prod_{k=1}^{N^2} (1-p)^{g_k} + g_k p(1-p)^{g_k-1}.
\end{equation}
For a real-valued function $f$, let $f_+ = \max\{f,0\}$.  Then by Lemma \ref{productsat}, if $n \geq n_0(\varepsilon)$, then 
\begin{equation}\label{sum of g_k}
\sum_{k=1}^{N^2} (g_k - 1)_+ \geq \frac{\delta n^2}{(\log n)^8}.
\end{equation}
If $g_i \geq g_j+2$, then \eqref{probability product} increases by replacing $g_i$ with $g_i -1$ and $g_j$ with $g_j+1$. So by \eqref{sum of g_k}
\[
\mathbb P(\chi) \leq \left((1-p)^2 + 2p(1-p)\right)^g = (1-p^2)^{g/2} \leq e^{-p^2 g/2},
\]
where $g = \delta n^2(\log n)^{-8}$.  Since $p^2 g \geq 4n\log n$, $\mathbb P(\chi) \leq n^{-2n} \ll N^{-n}$. 
This proves (\ref{expected}), and completes the proof of Theorem \ref{main}(ii). \qed

\end{document}